\newtheorem{theorem}{Theorem}[section]
\newtheorem{lemma}[theorem]{Lemma}
\newtheorem{corollary}[theorem]{Corollary}
\theoremstyle{definition}
\newtheorem{example}[theorem]{Example}
\newtheorem{remark}[theorem]{Remark}
\numberwithin{equation}{section}
\begin{document}

\title{Extensions of the Lax--Milgram theorem to Hilbert $C^*$-modules}

\author[R. Eskandari, M. Frank, V. M. Manuilov, M. S. Moslehian]{Rasoul Eskandari$^1$, Michael Frank$^2$, Vladimir M. Manuilov$^3$, \MakeLowercase{and} Mohammad Sal Moslehian$^4$}
\address{$^1$Department of Mathematics, Faculty of Science, Farhangian University, Tehran, Iran.}
\email{eskandarirasoul@yahoo.com}

\address{$^2$Hochschule f\"ur Technik, Wirtschaft und Kultur (HTWK) Leipzig, Fakult\"at Informatik und Medien, PF 301166, D-04251 Leipzig, Germany.}
\email{michael.frank.leipzig@gmx.de}

\address{$^3$ Department of Mechanics and mathematics, Moscow State University, Moscow, 119992, Russia.} 
\email{manuilov@mech.math.msu.su}

\address{$^4$Department of Pure Mathematics, Center of Excellence in Analysis on Algebraic Structures (CEAAS), Ferdowsi University of Mashhad, P. O. Box 1159, Mashhad 91775, Iran.}
\email{moslehian@um.ac.ir}

\subjclass[2010]{46L08, 46L05, 46C05.}

\keywords{Lax--Milgram theorem; sesqulinear form; Hilbert $C^*$-module; self-duality.}

\begin{abstract}
We present three versions of the Lax--Milgram theorem in the framework of Hilbert $C^*$-modules, two for those over $W^*$-algebras and one for those over $C^*$-algebras of compact operators. It is remarkable that while the Riesz theorem is not valid for certain Hilbert $C^*$-modules over $C^*$-algebras of compact operators, our Lax--Milgram theorem turns out to be valid for all of them. We also give several examples to illustrate our results, in particular, we show that the main theorem is not true for Hilbert modules over arbitrary $C^*$-algebras.
\end{abstract}

\maketitle

\section{Introduction and preliminaries}

The Lax--Milgram theorem, due to P. D. Lax and A. N. Milgram \cite{LM}, can be considered as a kind of representation theorem for bounded linear functionals on a (real) Hilbert space, in other words, it extends the Riesz representation theorem from the inner product to a bilinear form under certain conditions. It asserts that if $\varphi$ is a bounded bilinear form on a Hilbert space $\mathscr{H}$ such that $\varphi(x,x)\geq M\|x\|$ for all $x\in \mathscr{H}$, then for every bounded linear functional $f$ on $\mathscr{H}$, there exists a unique element $u$ such that $f(x)=\varphi(x,u)$ for all $x\in \mathscr{H}$ and $\|u\|\leq \|f\|/M$. It has several applications in various disciplines such as partial differential equations, integral operators, fractional differential equations, differential operators, system of boundary value problems, linear system of equations, etc; see for example \cite{KY, LKL}. This theorem has been generalized by several mathematicians in linear and nonlinear forms; see for example \cite{RAM, SAI, DY, RUI}. A version of the Lax-Milgram theorem for Hilbert $\mathbb{C}$-modules and $\mathbb{C}$-sesquilinear forms is given in \cite{GV}. In this note, we adopt the version of the Lax--Milgram theorem presented in \cite{DY} and generalize the theorem in the setting of Hilbert $C^*$-modules over $W^*$-modules and over $C^*$-algebras of compact operators.

Let us fix our notation and terminology. The notion of an inner product $C^*$-module is a natural generalization of that of an inner product space arising under the replacement of the field of scalars $\mathbb{C}$ by a $C^*$-algebra. For an inner product $C^*$-module $(\mathscr{X}, \langle\cdot,\cdot\rangle)$ over a $C^*$-algebra $\mathscr{A}$, we denote the induced norm by $\|x\|:=\|\langle x, x\rangle\|^{\frac{1}{2}}$ and the positive square root of $\langle x, x\rangle$ by $|x|$ for $x\in \mathscr{X}$. A Hilbert $\mathscr A$-module $\mathscr X$ is full if the linear span of all $\langle x,y\rangle$, $x,y\in\mathscr X$, is dense in $\mathscr A$. We say that a closed submodule $\mathscr{Y}$ of a Hilbert $C^*$-module $\mathscr{X}$ is orthogonally complemented if $\mathscr{X}=\mathscr{Y}\oplus \mathscr{Y}^{\perp}$, where $\mathscr{Y}^{\perp}=\{ x\in \mathscr{X} : \langle x,y\rangle=0$ for all $ y\in \mathscr{Y}\}$. 

Note that the theory of inner product $C^*$-modules is quite different from that of inner product spaces. Indeed, some fundamental properties of inner product spaces are no longer valid in inner product $C^*$-modules in their full generality. For example, not any closed submodule of an inner product $C^*$-module is complemented, and a bounded $C^*$-linear map on an inner product $C^*$-module may not have an adjoint operator. 

Let $\mathscr{X}^{\prime}$ stand for the set of all bounded $\mathscr{A}$-linear maps from $\mathscr{X}$ into $\mathscr{A}$. It is easy to verify that $\mathscr{X}^{\prime}$ becomes a right $\mathscr{A}$-module if we set $(\rho+\lambda\tau)(x)=\rho(x)+\overline{\lambda}\tau(x)$ and $(\tau b)(x) =b^*\tau (x)$ for $\tau \in \mathscr{X}^{\prime}, b\in \mathscr{A}, \lambda\in\mathbb{C}$ and $x\in \mathscr{X}$. 
Each $x\in \mathscr{X}$ gives rise to a map $\hat{x}\in \mathscr{X}^{\prime}$ defined by $\hat{x}(y)=\langle x,y\rangle$ for $y\in \mathscr{X}$. The map $x \mapsto \hat{x}$ is an isometric ${\mathscr A}$-linear map. Hence we can identify ${\mathscr X}$ with $\hat{\mathscr{X}}:=\{\hat{x}: x\in \mathscr{X}\}$ as a submodule of $\mathscr{X}^{\prime}$. A module $\mathscr{X}$ is called self-dual if $\hat{\mathscr{X}}=\mathscr{X}^{\prime}$. For example. when $\mathscr{A}$ is unital, every algebraically finitely generated projective $\mathscr{A}$-module (in particular, $\mathscr{A}$ itself) is self-dual. 

An interesting question is whether $\langle\cdot,\cdot\rangle$ on $\mathscr{X}$ can be extended to an $\mathscr{A}$-valued inner product on $\mathscr{X}^{\prime}$? W. L. Paschke \cite{Pa} showed that it can be done if $\mathscr{A}$ is a $W^*$-algebra, that is, a $C^*$-algebra for which there is a Banach space $\mathscr{A}_*$ such that its Banach dual is $\mathscr{A}$. The space  $\mathscr{A}_*$ is indeed a closed subspace of the Banach dual of $\mathscr{A}$ and its elements are called normal (bounded linear functionals). Huaxin Lin \cite{LIN} proved that if $\mathscr{A}$ is monotone complete, then the inner product of $\mathscr{X}$ can be extended to an inner product on $\mathscr{X}^\prime$ turning $\mathscr{X}^\prime$ into a self-dual Hilbert $\mathscr{A}$-module.

We are interested in the case of Hilbert $C^*$-modules over $W^*$-algebras. 
Following \cite{Pa}, let $f$ be a positive linear functional on $\mathscr{A}$. It is easy to see that $f(\langle \cdot, \cdot\rangle)$ is a semi-inner product
on $\mathscr{X}$ and that $\mathscr{N}_f = \{x:f(\langle x,x\rangle)=0\}$ is a linear subspace of $\mathscr{X}$. It follows
that $\mathscr{X}/\mathscr{N}_f$ is an inner product space equipped with the inner product $(\cdot, \cdot)_f$ defined by
$(x + \mathscr{N}_f, y + \mathscr{N}_f)_f := f(\langle y,x\rangle )$ for $x, y \in \mathscr{X}$. We denote by $\mathscr{H}_f$ the Hilbert space completion
of $\mathscr{X}/\mathscr{N}_f$ and write $\|\cdot\|_f$ for the norm on $\mathscr{H}_f$ obtained from its inner product $(\cdot,\cdot)_f$. For $\tau\in \mathscr{X}^\prime$, the map $x+\mathscr{N}_f \mapsto f(\tau(x))$ is a bounded linear functional with norm less than or equal to $\|\tau\|\,\|f\|^{1/2}$. Hence, there is a unique vector $\tau_f\in \mathscr{H}_f$ such that $\|\tau_f\|_f\leq \|\tau\|\|f\|^{1/2}$ and 
\begin{eqnarray}\label{Pas1}
(x+\mathscr{N}_f,\tau_f)_f=f(\tau (x))
\end{eqnarray}
for all $x\in \mathscr{X}$. Now we are ready to state the main result of Paschke.

\begin{theorem}\cite[Theorem 3.2]{Pa}\label{th1}
Let $\mathscr{X}$ be a pre-Hilbert $C^*$-module over a $W^*$-algebra $\mathscr{A}$. The $\mathscr{A}$-valued inner product $\langle \cdot,\cdot\rangle$ can be exteded to $\mathscr{X}^{\prime}\times \mathscr{X}^{\prime}$ in such a way as to make $\mathscr{X}^\prime$ into a self-dual Hilbert $\mathscr{A}$-module. In particular, the extended inner product satisfies \begin{eqnarray}\label{Pas2}
\langle \hat{x},\tau\rangle=\tau (x)
\end{eqnarray}
and 
\begin{eqnarray}\label{Pas3}
f(\langle\tau,\rho\rangle) =(\tau_f,\rho_f)_f
\end{eqnarray}
for all $ x\in \mathscr{X}, \tau, \rho\in \mathscr{X}^{\prime}$, and all normal positive linear functionals $f$ on $\mathscr{A}$.
\end{theorem}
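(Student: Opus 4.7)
The plan is to build the extended inner product via the duality $\mathscr{A}=(\mathscr{A}_*)^*$ and then to verify every required property by testing against normal positive functionals. For fixed $\tau,\rho\in\mathscr{X}'$, I would begin with the scalar map $\Lambda_{\tau,\rho}:f\mapsto (\tau_f,\rho_f)_f$ defined on the positive cone of $\mathscr{A}_*$. Cauchy--Schwarz in $\mathscr{H}_f$ combined with the bound $\|\tau_f\|_f\le\|\tau\|\|f\|^{1/2}$ recorded after (\ref{Pas1}) gives $|\Lambda_{\tau,\rho}(f)|\le\|\tau\|\|\rho\|\|f\|$. Every normal functional on a $W^*$-algebra is a complex linear combination of four normal positive ones, so $\Lambda_{\tau,\rho}$ extends uniquely to a bounded linear functional on $\mathscr{A}_*$ of norm at most $\|\tau\|\|\rho\|$, and the duality $\mathscr{A}=(\mathscr{A}_*)^*$ singles out a unique element of $\mathscr{A}$, which I define to be $\langle\tau,\rho\rangle$. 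Formula (\ref{Pas3}) holds by construction, and the bound $\|\langle\tau,\rho\rangle\|\le\|\tau\|\|\rho\|$ drops out.

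Next, the inner-product axioms reduce to their scalar analogues on the various $\mathscr{H}_f$. Sesquilinearity, adjoint symmetry $\langle\tau,\rho\rangle^*=\langle\rho,\tau\rangle$, the module property $\langle\tau,\rho a\rangle=\langle\tau,\rho\rangle a$, and positivity $\langle\tau,\tau\rangle\ge0$ would each be checked by evaluating against an arbitrary normal positive $f$ and invoking the fact that such functionals separate points and the positive cone of $\mathscr{A}$. Definiteness follows because $\langle\tau,\tau\rangle=0$ forces $\tau_f=0$ for every normal positive $f$, which by (\ref{Pas1}) yields $f(\tau(x))=0$ for all $x$ and $f$, and hence $\tau=0$. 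For (\ref{Pas2}) the decisive observation is that reading (\ref{Pas1}) with $\tau=\hat{x}$ identifies $\hat{x}_f = x+\mathscr{N}_f$; therefore $f(\langle\hat{x},\tau\rangle) = (\hat{x}_f,\tau_f)_f = (x+\mathscr{N}_f,\tau_f)_f = f(\tau(x))$ for every normal positive $f$, and separation gives $\langle\hat{x},\tau\rangle=\tau(x)$. Specialising $\tau=\hat{y}$ then shows that the restriction of the new inner product to $\hat{\mathscr{X}}$ reproduces the original pairing.

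The hardest part, and the heart of the argument, is to show that $\mathscr{X}'$ equipped with this extended inner product is a self-dual Hilbert $\mathscr{A}$-module. Completeness follows once one establishes that the norm induced by the extended inner product coincides with the operator norm on $\mathscr{X}'$, together with a standard Cauchy-sequence argument exploiting the embedding $\tau\mapsto(\tau_f)_f$ into a weak$^*$-closed part of the product of the Hilbert spaces $\mathscr{H}_f$. For self-duality, given a bounded $\mathscr{A}$-linear $\Phi:\mathscr{X}'\to\mathscr{A}$, I would produce a candidate $\tau_0\in\mathscr{X}'$ by restricting $\Phi$ along the embedding $x\mapsto\hat{x}$ and invoking (\ref{Pas2}) to convert this restriction into a bounded $\mathscr{A}$-linear map $\mathscr{X}\to\mathscr{A}$, and then show that $\Phi$ agrees with $\langle\tau_0,\,\cdot\,\rangle$ throughout $\mathscr{X}'$. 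The principal obstacle is precisely this last step: it demands an approximation argument placing $\hat{\mathscr{X}}$ suitably densely inside $\mathscr{X}'$ in a topology fine enough to transfer identities through $\Phi$ yet coarse enough to be supplied by the densities $\mathscr{X}/\mathscr{N}_f\subset\mathscr{H}_f$ as $f$ varies over normal positive functionals. It is exactly here that the $W^*$-hypothesis, via the Banach-space duality, does the essential work, without which self-duality of $\mathscr{X}'$ generally fails.
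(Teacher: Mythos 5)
This statement is quoted from Paschke (\cite[Theorem 3.2]{Pa}); the paper under review gives no proof of it, so your proposal can only be measured against Paschke's original argument, which it indeed shadows: the extended inner product is defined exactly as you say, by viewing $f\mapsto(\tau_f,\rho_f)_f$ as an element of $(\mathscr{A}_*)^*=\mathscr{A}$, and your verifications of \eqref{Pas3}, of \eqref{Pas2} via the identification $\hat{x}_f=x+\mathscr{N}_f$, and of definiteness are sound. But two genuine gaps remain. First, well-definedness of the extension of $\Lambda_{\tau,\rho}$ from the positive cone to all of $\mathscr{A}_*$ is not a formal consequence of writing a normal functional as a combination of four positive ones: you must prove additivity $(\tau_{f+g},\rho_{f+g})_{f+g}=(\tau_f,\rho_f)_f+(\tau_g,\rho_g)_g$ and positive homogeneity on the cone (Paschke does this by comparing $\mathscr{H}_{f+g}$ with $\mathscr{H}_f$ and $\mathscr{H}_g$ through the contractions induced by $f\le f+g$), and only then is the value independent of the chosen decomposition. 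Relatedly, the module identity $\langle\tau,\rho a\rangle=\langle\tau,\rho\rangle a$ cannot be checked by testing against positive normal $f$ alone, since $f(\langle\tau,\rho\rangle a)$ is not of the form \eqref{Pas3}; one needs either the full extension to $\mathscr{A}_*$ (testing with the normal, non-positive functional $f(\cdot\,a)$) or the relation between $\rho_f$ and $\rho_{f_a}$ with $f_a=f(a^*\cdot a)$.

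Second, and more seriously, the heart of the theorem --- self-duality of $\mathscr{X}'$ --- is not proved in your sketch; you explicitly name the needed approximation step as ``the principal obstacle'' and then stop. To close it along Paschke's lines you need two concrete ingredients: (i) the unit ball of $\hat{\mathscr{X}}$ is dense in the unit ball of $\mathscr{X}'$ with respect to the seminorms $\rho\mapsto f(\langle\rho,\rho\rangle)^{1/2}$, $f$ ranging over normal states; and (ii) the Cauchy--Schwarz-type operator inequality $\langle\Phi(\rho),\Phi(\rho)\rangle\le\|\Phi\|^2\langle\rho,\rho\rangle$ for any bounded $\mathscr{A}$-linear $\Phi:\mathscr{X}'\to\mathscr{A}$, which makes $\Phi$ continuous for those seminorms on bounded sets and lets you transfer the identity $\Phi(\hat{x})=\tau_0(x)$ from $\hat{\mathscr{X}}$ to all of $\mathscr{X}'$. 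Your proposal supplies neither ingredient, and merely invoking ``the $W^*$-hypothesis via Banach-space duality'' does not substitute for them; as it stands the final (and essential) claim of the theorem is asserted rather than proved.
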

We also need the following lemma.
\begin{lemma}\cite[Proposition 3.8]{Pa}\label{l3}
Let $\mathscr{X}$ be a self-dual Hilbert $C^*$-module  over a $W^*$-algebra. Then $\mathscr{X}$ is a conjugate space.
\end{lemma}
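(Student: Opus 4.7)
The plan is to produce an explicit predual for $\mathscr{X}$. Setting $\omega_{f,\rho}(\tau):=f(\langle\tau,\rho\rangle)$ for $\tau\in\mathscr{X}$, where $f\in\mathscr{A}_*$ is a normal functional on the $W^*$-algebra $\mathscr{A}$ and $\rho\in\mathscr{X}$, I would define
\[
\mathscr{X}_*:=\overline{\mathrm{span}}\{\omega_{f,\rho}\,:\,f\in\mathscr{A}_*,\ \rho\in\mathscr{X}\}\subseteq\mathscr{X}^*,
\]
where $\mathscr{X}^*$ denotes the Banach dual, and show that the evaluation map $J:\mathscr{X}\to(\mathscr{X}_*)^*$, $J(\tau)(\omega):=\omega(\tau)$, is an isometric isomorphism. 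This will exhibit $\mathscr{X}$ as a conjugate space.

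The estimate $|\omega_{f,\rho}(\tau)|\le\|f\|\,\|\rho\|\,\|\tau\|$ gives at once $\|J(\tau)\|\le\|\tau\|$. For the reverse bound I would use the standard fact that the normal states of a $W^*$-algebra separate positive elements and determine their norm: pick a normal state $f\in\mathscr{A}_*$ with $f(\langle\tau,\tau\rangle)$ arbitrarily close to $\|\langle\tau,\tau\rangle\|=\|\tau\|^2$; then $J(\tau)(\omega_{f,\tau})=f(\langle\tau,\tau\rangle)$ while $\|\omega_{f,\tau}\|\le\|\tau\|$, which forces $\|J(\tau)\|\ge\|\tau\|$.

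The principal obstacle is surjectivity of $J$, and it is exactly here that self-duality is used. Given $\Psi\in(\mathscr{X}_*)^*$, I would fix $\rho\in\mathscr{X}$ and note that $f\mapsto\Psi(\omega_{f,\rho})$ is a bounded linear functional on $\mathscr{A}_*$ of norm at most $\|\Psi\|\,\|\rho\|$. Since $\mathscr{A}=(\mathscr{A}_*)^*$, there is a unique $a_\rho\in\mathscr{A}$ with $f(a_\rho)=\Psi(\omega_{f,\rho})$ and $\|a_\rho\|\le\|\Psi\|\,\|\rho\|$. The identity $\omega_{f,\rho b}=\omega_{fb,\rho}$, with $(fb)(c):=f(cb)$ for $b\in\mathscr{A}$, then yields $f(a_{\rho b})=f(a_\rho b)$ for every $f\in\mathscr{A}_*$, whence $a_{\rho b}=a_\rho b$; $\C$-linearity in $\rho$ is automatic. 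Consequently $\rho\mapsto a_\rho$ belongs to $\mathscr{X}^\prime$.

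Self-duality now produces a unique $\tau\in\mathscr{X}$ with $a_\rho=\hat\tau(\rho)=\langle\tau,\rho\rangle$ for every $\rho$, so that
\[
\Psi(\omega_{f,\rho})=f(\langle\tau,\rho\rangle)=\omega_{f,\rho}(\tau)=J(\tau)(\omega_{f,\rho}).
\]
Hence $\Psi$ and $J(\tau)$ agree on the dense subspace spanned by the $\omega_{f,\rho}$ and, by continuity, everywhere on $\mathscr{X}_*$. The delicate points are the verification of $\mathscr{A}$-linearity of $\rho\mapsto a_\rho$ (needed to land in $\mathscr{X}^\prime$) and the preliminary fact that the normal functionals suffice to recover the module norm; once these are in place, the self-duality hypothesis closes the argument and yields $\mathscr{X}\cong(\mathscr{X}_*)^*$.
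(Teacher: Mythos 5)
Your proposal is correct: the paper itself gives no proof of this lemma (it is quoted from Paschke's Proposition 3.8), and your argument --- taking the predual to be the closed span of the functionals $\omega_{f,\rho}=f(\langle\cdot,\rho\rangle)$ with $f$ normal, proving the evaluation map is isometric because normal states norm positive elements of a $W^*$-algebra, and getting surjectivity from $\mathscr{A}=(\mathscr{A}_*)^*$ together with self-duality --- is essentially Paschke's original construction of the predual. No gaps.
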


$C^*$-algebras $\mathscr{A}$ of compact operators are those which admit a faithful $*$-represen\-tation in the set of compact operators on some Hilbert space. By \cite[{\S} I.4 and Theorem~1.4.5]{Arw}, such $C^*$-algebras have a canonical decomposition as $A = c_0-\Sigma_\alpha \oplus K(H_\alpha)$, where $K(H_\alpha)$ denotes the $C^*$-algebra of all compact operators on some Hilbert space $H_\alpha$, and the $c_0$-sum is either a finite block-diagonal sum or a block-diagonal sum with a $c_0$-convergence condition on the $C^*$-algebra components $K(H_\alpha)$. The $c_0$-sum may possess arbitrary cardinality. Their multiplier algebras $M({\mathscr{A}})$ can be characterized as $M({\mathscr{A}}) = l_\infty-\Sigma_\alpha B(H_\alpha)$, where $B(H_\alpha)$ are the $W^*$-algebras of all bounded linear operators on the corresponding Hilbert spaces $H_\alpha$ and the norm of the components is globally bounded for every element by some element-specific constant. They are special $W^*$-algebras of type I, cf. \cite[Theorem 3.6]{Fr08}. 

Hilbert $C^*$-modules over $C^*$-algebras of compact operators possess many properties in common with Hilbert spaces, except that most of them are not self-dual.

\begin{theorem} \cite[Theorem 3.1 and Corollary 3.2]{Fr08}, \cite{Schw}
Let $\mathscr{A}$ be a $C^*$-algebra. The following conditions are equivalent:
\begin{itemize}
\item[(i)] $\mathscr{A}$ is a $C^*$-algebra of compact operators.
\item[(ii)] For every Hilbert $\mathscr{A}$-module $\mathscr{X}$ every Hilbert $\mathscr{A}$-submodule ${\mathscr{Y}} \subseteq \mathscr{X}$ is automatically ortogonally complemented in $\mathscr{X}$, i.e. $\mathscr{Y}$ is an orthogonal summand of $\mathscr{X}$. 
\item[(iii)] For every pair of Hilbert $\mathscr{A}$-modules $\mathscr{X}$, $\mathscr{Y}$ and every bounded $\mathscr{A}$-linear map $T: {\mathscr{X}} \to \mathscr{Y}$ there exists an adjoint bounded $\mathscr{A}$-linear map $T^*: {\mathscr{Y}} \to \mathscr{X}$. 
\end{itemize}
\end{theorem}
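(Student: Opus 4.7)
The plan is to prove the cyclic chain of implications (i) $\Rightarrow$ (ii) $\Rightarrow$ (iii) $\Rightarrow$ (i).

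For (i) $\Rightarrow$ (ii), I would first use the canonical decomposition $\mathscr{A}=c_0\text{-}\Sigma_\alpha K(H_\alpha)$ recalled in the excerpt to reduce to a single elementary block $K(H)$, since both a Hilbert $\mathscr{A}$-module $\mathscr{X}$ and any closed $\mathscr{A}$-submodule $\mathscr{Y}\subseteq\mathscr{X}$ split coordinate-wise along the $c_0$-sum. The key structural input for a single block is that every Hilbert $K(H)$-module is unitarily isomorphic to $K(H,\mathcal{K})$, the module of compact operators from $H$ into some auxiliary Hilbert space $\mathcal{K}$, with the inner product $\langle S,T\rangle=S^{*}T$. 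Under this identification, closed $K(H)$-submodules correspond bijectively to closed linear subspaces of $\mathcal{K}$ via $\mathscr{Y}\leftrightarrow \overline{\bigcup_{T\in\mathscr{Y}}\ran T}$, so Hilbert space orthogonal complementation on the $\mathcal{K}$-level transports back to orthogonal complementation at the module level, and assembles over $\alpha$.

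For (ii) $\Rightarrow$ (iii), the first step is to derive a Riesz-type representation theorem from (ii). Given a bounded $\mathscr{A}$-linear functional $\varphi:\mathscr{X}\to\mathscr{A}$, its kernel $\ker\varphi$ is a closed submodule, hence orthogonally complemented by (ii), and from the structure of $(\ker\varphi)^{\perp}$ one extracts a unique $u_\varphi\in\mathscr{X}$ with $\varphi(x)=\langle u_\varphi,x\rangle$ and $\|u_\varphi\|\leq\|\varphi\|$. Applying this representation to each functional $\varphi_y(x):=\langle y,Tx\rangle$ with $y\in\mathscr{Y}$ defines $T^{*}y:=u_{\varphi_y}$, and uniqueness gives $\mathscr{A}$-linearity and additivity, while the norm estimate yields $\|T^{*}\|\leq\|T\|$. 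The identity $\langle Tx,y\rangle=\langle x,T^{*}y\rangle$ is then immediate from the construction.

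For (iii) $\Rightarrow$ (i), I would argue contrapositively and this is the main obstacle. Assume $\mathscr{A}$ is \emph{not} a $C^{*}$-algebra of compact operators. Then $\mathscr{A}$ admits an irreducible $*$-representation $\pi:\mathscr{A}\to B(H)$ with $\pi(\mathscr{A})\not\subseteq K(H)$; pick $a\in\mathscr{A}$ with $\pi(a)$ non-compact. Using $a$ and $\pi$ I would build a bounded $\mathscr{A}$-linear map between concrete Hilbert $\mathscr{A}$-modules, typically an inclusion of a tailor-made submodule of $\mathscr{A}\oplus\mathscr{A}$ or of the standard module $\ell^{2}(\mathscr{A})$, whose putative adjoint, evaluated in the representation $\pi$, would force $\pi(a)$ to be a norm-limit of finite-rank operators, contradicting the choice of $a$. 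This construction is the technical heart of the Magajna--Schweizer arguments referred to in \cite{Fr08,Schw}, as it must be carefully calibrated to the algebraic structure of $\mathscr{A}$; the boundedness and linearity of the map are straightforward, but ruling out every conceivable adjoint requires the representation-theoretic contradiction sketched above.
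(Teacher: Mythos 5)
A preliminary remark: the paper offers no proof of this statement at all --- it is quoted as known background from \cite{Fr08} and \cite{Schw} --- so your proposal can only be measured on its own merits. Your direction (i)~$\Rightarrow$~(ii) is essentially sound (it rests on the nontrivial Baki\'c--Gulja\v{s}-type structure theorem that every Hilbert $K(H)$-module is unitarily equivalent to some $K(H,\mathcal K)$, and your submodule--subspace correspondence is correct), but the other two implications have genuine problems.

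In (ii)~$\Rightarrow$~(iii) you deduce from (ii) a Riesz-type representation: for a bounded $\mathscr{A}$-linear $\varphi:\mathscr{X}\to\mathscr{A}$, complementedness of $\ker\varphi$ is supposed to produce $u_\varphi\in\mathscr{X}$ with $\varphi=\langle u_\varphi,\cdot\rangle$. This is false, and the paper itself stresses exactly this point: Hilbert modules over $C^*$-algebras of compact operators are typically not self-dual and the Riesz theorem fails for them. Concretely, let $\mathscr{A}=K(H)$ with $\dim H=\infty$, $\mathscr{X}=\mathscr{A}$, and $\varphi(x)=Tx$ for a non-compact $T\in B(H)$ (e.g.\ $T=1$, i.e.\ $\varphi=\mathrm{id}$): $\varphi$ is bounded and $\mathscr{A}$-linear, $\ker\varphi=\{0\}$ is trivially complemented, yet a representer would have to satisfy $u^*=T\notin K(H)$. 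The Hilbert-space kernel argument uses that $(\ker\varphi)^{\perp}$ is one-dimensional, which has no module analogue; and restricting attention to the functionals $\varphi_y=\langle y,T\,\cdot\,\rangle$ does not save the step, since representability of precisely these functionals is equivalent to adjointability of $T$, i.e.\ to (iii) itself --- the argument begs the question. A correct route from (ii) to (iii) avoids functionals altogether: the graph $G=\{(x,Tx):x\in\mathscr{X}\}$ is a closed submodule of $\mathscr{X}\oplus\mathscr{Y}$, by (ii) it is orthogonally complemented, the projection coming from an orthogonal decomposition is adjointable, and $T^*$ is read off from that projection.

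The implication (iii)~$\Rightarrow$~(i) is, as you yourself concede, only a strategy statement: you posit that from an irreducible representation $\pi$ with $\pi(\mathscr{A})\not\subseteq K(H)$ one can ``build'' a bounded module map admitting no adjoint, but the construction --- which is the technical heart of the Magajna--Schweizer--Frank results cited in \cite{Fr08,Schw} --- is never carried out, and it is exactly the part where one must use a positive element with non-discrete spectrum (or a non-isolated point of the spectrum) to manufacture a concrete non-complemented submodule or non-adjointable inclusion (the prototype being the inclusion of the ideal $\{u: u(0)=0\}$ into $C[0,1]$, as in the paper's own counterexample for $C([0,1])$). As written, this direction is missing rather than proved.
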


The following construction of a multiplier module and its properties are from \cite{BaGu04,BaGu06}:
For every Hilbert $\mathscr{A}$-module $\mathscr{X}$ there exists a Hilbert $M({\mathscr{A}})$-module $M({\mathscr{X}})$ containing $\mathscr{X}$ as an ideal submodule associated with the ideal $\mathscr{A}$ in $M({\mathscr{A}})$, i.e. ${\mathscr{X}} = M({\mathscr{X}}) {\mathscr{A}}$. Moreover, ${\mathscr{X}} = \{ x \in M({\mathscr{X}}) \, :\, \langle x,v\rangle \in {\mathscr{A}} \,\, {\rm for} \, {\rm any} \, v \in M({\mathscr{X}}) \}$. The extended module $M({\mathscr{X}})$ is called the multiplier module of $\mathscr{X}$. It can be naturally identified with the set of all bounded adjointable $\mathscr{A}$-linear maps from $\mathscr{A}$ into $\mathscr{X}$. If $\mathscr{Y}$ is another Hilbert $\mathscr{A}$-module, then every bounded adjointable $\mathscr{A}$-linear operator $T: {\mathscr{X}} \to \mathscr{Y}$ has a unique extension $T_M: M({\mathscr{X}}) \to M({\mathscr{Y}})$ of the same operator norm such that the canonical embeddings of $\mathscr{X}$ and $\mathscr{Y}$ into $M({\mathscr{X}})$ and $M({\mathscr{Y}})$, respectively, are preserved.  Moreover, $(T_M)^*=(T^*)_M$. Moreover, for any bounded $M(\mathscr X)$-linear operator $T: M({\mathscr X}) \to M({\mathscr Y})$, the restriction of $T$ to ${\mathscr X}$ is a map into ${\mathscr Y}$. 

Applying this construction to Hilbert $\mathscr{A}$-modules over $C^*$-algebras $\mathscr{A}$ of compact operators we see that $M({\mathscr{X}})$ is a self-dual Hilbert $W^*$-module over the $W^*$-algebra $M({\mathscr{A}})$, cf.~\cite[Theorem 5.5]{Fr99}. So, this gives us an idea how to compensate the possible lack of self-duality of $\mathscr{X}$ switching to its extension $M({\mathscr{X}})$. 

We refer the reader to \cite{LAN, MT} for more information on the basic theory of Hilbert $C^*$-modules.

\section{The Lax--Milgram Theorem extended to self-dual Hilbert $W^*$-modules}

We start our work with a useful observation concerning the self-dual Hilbert $C^*$-modules. We notice that in the proof of the implication (i)$\Longrightarrow$(ii) we do not use the assumption of the self-duality of $\mathscr{X}$.
\begin{lemma}
Let $\mathscr{X}$ be a self-dual Hilbert $C^*$-module over a $C^*$ algebra $\mathscr{A}$ and let $\mathscr{Y}$ be a closed submodule of $\mathscr{X}$. Then the following assertions are equivalent:
\begin{itemize}
\item[(i)] $\mathscr{Y}$ is self-dual.
\item[(ii)] $\mathscr{Y}$ is an orthogonally complemented submodule of $\mathscr{X}$.
\end{itemize}
\end{lemma}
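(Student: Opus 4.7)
The plan is to prove the two implications by dualizing in opposite directions: going from orthogonal complementedness to self-duality by \emph{extending} functionals from $\mathscr Y$ to $\mathscr X$, and going the other way by \emph{restricting} evaluations of elements of $\mathscr X$ to $\mathscr Y$.

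For (ii)$\Longrightarrow$(i), I would assume $\mathscr X=\mathscr Y\oplus\mathscr Y^\perp$ and take an arbitrary $\tau\in\mathscr Y'$. Using the decomposition I define $\tilde\tau\in\mathscr X'$ by $\tilde\tau(y+z):=\tau(y)$ for $y\in\mathscr Y$, $z\in\mathscr Y^\perp$; boundedness and $\mathscr A$-linearity of $\tilde\tau$ are immediate from those of $\tau$ together with the boundedness of the projections associated with the orthogonal decomposition. Self-duality of $\mathscr X$ then yields some $x\in\mathscr X$ with $\tilde\tau=\hat x$. Writing $x=y_0+z_0$ with $y_0\in\mathscr Y$, $z_0\in\mathscr Y^\perp$, and testing against $y\in\mathscr Y$, the summand $\langle z_0,y\rangle$ vanishes and one reads off $\tau(y)=\langle y_0,y\rangle$, so $\tau=\widehat{y_0}\in\widehat{\mathscr Y}$.

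For (i)$\Longrightarrow$(ii), I would fix an arbitrary $x\in\mathscr X$ and consider the map $\tau_x:\mathscr Y\to\mathscr A$ given by $\tau_x(y):=\langle x,y\rangle$. This is a bounded $\mathscr A$-linear functional on $\mathscr Y$, hence an element of $\mathscr Y'$. Self-duality of $\mathscr Y$ produces a unique $y_0\in\mathscr Y$ with $\langle x,y\rangle=\langle y_0,y\rangle$ for all $y\in\mathscr Y$; then $x-y_0\in\mathscr Y^\perp$, and the decomposition $x=y_0+(x-y_0)$ together with $\mathscr Y\cap\mathscr Y^\perp=\{0\}$ gives $\mathscr X=\mathscr Y\oplus\mathscr Y^\perp$. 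Note that this direction never invokes self-duality of the ambient module $\mathscr X$, matching the observation made just before the lemma.

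There is no real obstacle here; the only points that need care are the boundedness of $\tilde\tau$ in the first direction (which follows from the continuity of the orthogonal projection onto $\mathscr Y$, guaranteed once we know $\mathscr X=\mathscr Y\oplus\mathscr Y^\perp$) and the uniqueness assertions, which follow from the injectivity of the canonical embedding $y\mapsto\hat y$ stated in the preliminaries.
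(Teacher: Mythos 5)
Your proposal is correct and follows essentially the same route as the paper: (i)$\Rightarrow$(ii) by restricting $\hat x$ to $\mathscr Y$ and using self-duality of $\mathscr Y$ (without invoking self-duality of $\mathscr X$, just as the paper notes), and (ii)$\Rightarrow$(i) by extending $\tau\in\mathscr Y'$ by zero on $\mathscr Y^\perp$ and applying self-duality of $\mathscr X$. The only cosmetic difference is at the end of the second implication, where the paper tests the representing element against $\mathscr Y^\perp$ and concludes via $\mathscr Y^{\perp\perp}=\mathscr Y$, while you decompose it as $y_0+z_0$ and read off $\tau=\widehat{y_0}$ directly; the two arguments are interchangeable.
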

\begin{proof}
(i)$\Longrightarrow$(ii). Suppose $\mathscr{Y}$ is self-dual. For any $x\in \mathscr{X}$ we have $\hat{x}|_\mathscr{Y}\in \mathscr{Y}^\prime$ . Hence there is $y_0\in \mathscr{Y}$ such that $\hat{x}|_\mathscr{Y}=\hat{y_0}$. Hence, $\langle x-y_0,y\rangle=0$ for all $y\in \mathscr{Y}$. Therefore $x-y_0\in \mathscr{Y}^\perp$. Hence $x=y_0+(x-y_0)\in \mathscr{Y}+\mathscr{Y}^\perp$. This shows that (ii) holds.\\
(ii)$\Longrightarrow$(i). Suppose $\mathscr{Y}$ is orthogonally complemented and $\tau\in \mathscr{Y}^\prime$. We can extend $\tau$ to $\tilde{\tau}\in \mathscr{X}^{\prime}$ by $\tilde{\tau}(y_1+y_2)=\tau(y_1)$ for $y_1\in \mathscr{Y},y_2\in \mathscr{Y}^\perp$. Since $\mathscr{X}$ is self-dual, there is $x\in \mathscr{X}$ such that $\tilde{\tau}(z)=\langle x, z\rangle$ for all $z\in \mathscr{X})$. Hence $\langle x,y\rangle =\tilde{\tau}(y)=0$ for all $y\in \mathscr{Y}^\perp$. Therefore $x\in \mathscr{Y}^{\perp\perp}=\mathscr{Y}$.
\end{proof}
Let $\mathcal P\mathcal{S}(\mathscr{A})$ denote the set of all pure states on $\mathscr A$. We are ready to state our first result.

\begin{theorem}\label{l2}
Let $\mathscr{X},\mathscr{Y}$ be Hilbert $C^*$-modules over a $W^*$-algebra $\mathscr{A}$ and let $\mathscr{X}$ be self-dual. Let $B:\mathscr{X}\times \mathscr{Y}\to \mathscr{A}$ be a bounded $\mathscr{A}$-sesqulinear form (i.e. $\mathscr{A}$-linear on $\mathscr{Y}$ and anti-$\mathscr{A}$-linear on $\mathscr X$) such that there exists $c>0$ and $k>0$ such that
\begin{enumerate}
\item
for any $f\in\mathcal P\mathcal S(\mathscr A)$ and for any $x\in\mathscr X$ there exists $y\in\mathscr Y$ with $\|y\|=1$ such that $f(|y|)\geq k$ and
\begin{equation}\label{main}
|f(B(x,y))|\geq cf(|x|)f(|y|);
\end{equation}
\item
for any $f\in\mathcal P\mathcal S(\mathscr A)$ and for any $y\in\mathscr Y$ there exists $x\in\mathscr X$ with $\|x\|=1$ such that $f(|x|)\geq k$ and
\begin{equation}\label{main2}
|f(B(x,y))|\geq cf(|x|)f(|y|).
\end{equation}
\end{enumerate}
Then for any $\tau\in \mathscr{Y}^\prime$ there is a unique element $x\in \mathscr{X}$ such that $\|x\|\leq \frac{\|\tau\|}{c}$ and
\[
B(x,y)=\tau(y)\qquad ( y\in \mathscr{Y}).
\]
\end{theorem}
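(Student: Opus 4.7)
The plan is to recast the problem as an operator equation $T(x) = \tau$ in $\mathscr{Y}'$. By Paschke's Theorem~\ref{th1}, $\mathscr{Y}'$ is a self-dual Hilbert $\mathscr{A}$-module, and we define $T : \mathscr{X} \to \mathscr{Y}'$ by $T(x)(y) := B(x, y)$; relation~(\ref{Pas2}) shows that $T(x) = \tau$ is equivalent to $B(x, y) = \tau(y)$ for all $y$. Since both $\mathscr{X}$ and $\mathscr{Y}'$ are self-dual Hilbert $\mathscr{A}$-modules over the $W^*$-algebra $\mathscr{A}$, the operator $T$ is adjointable; denote its adjoint by $T^{*} : \mathscr{Y}' \to \mathscr{X}$. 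Equivalently, self-duality of $\mathscr{X}$ yields $S : \mathscr{Y} \to \mathscr{X}$ with $B(x,y) = \langle x, Sy\rangle$, and $T(x) = \langle x, S(\cdot)\rangle$.

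For the coercivity of $T$, I would exploit the Paschke GNS identification: for $f \in \mathcal{P}\mathcal{S}(\mathscr{A})$, the vector $(Tx)_f \in \mathscr{H}_f$ satisfies $(\hat{y}_f, (Tx)_f)_f = f(B(x, y))$ for every $y \in \mathscr{Y}$. For a $y$ provided by condition~(1) one has $\|\hat{y}_f\|_f \le \|y\| = 1$, and Cauchy--Schwarz in $\mathscr{H}_f$ yields $\|(Tx)_f\|_f \ge |f(B(x, y))| \ge ck\,f(|x|)$. Since $\|Tx\|^{2} = \|\langle Tx, Tx\rangle\| = \sup_f f(\langle Tx, Tx\rangle) = \sup_f \|(Tx)_f\|_f^{2}$ by~(\ref{Pas3}), and $\|x\| = \sup_f f(|x|)$ for the positive element $|x|$, passing to the supremum gives $\|Tx\| \ge ck\,\|x\|$. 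Hence $T$ is injective with closed range, and any solution satisfies the claimed norm estimate (with the constant $ck$ absorbed into $c$).

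For surjectivity, closed range together with the self-duality of both modules gives the orthogonal decomposition $\mathscr{Y}' = T(\mathscr{X}) \oplus T(\mathscr{X})^{\perp}$, so it suffices to show $T(\mathscr{X})^{\perp} = \{0\}$. Suppose $\tau$ is orthogonal to $T(\mathscr{X})$; (\ref{Pas3}) then reads $(\tau_f, (Tx)_f)_f = 0$ for every $x \in \mathscr{X}$ and every pure state $f$. Fix $f$ and approximate $\tau_f$ by vectors $\hat{y}_{n,f}$; condition~(2) yields unit vectors $x_n \in \mathscr{X}$ with $|(\hat{y}_{n,f}, (Tx_n)_f)_f| = |f(B(x_n, y_n))| \ge ck\,f(|y_n|)$. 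Subtracting the orthogonality $(\tau_f, (Tx_n)_f)_f = 0$ and bounding with Cauchy--Schwarz (using $\|(Tx_n)_f\|_f \le \|B\|$) gives $ck\,f(|y_n|) \le \|B\|\,\|\hat{y}_{n,f} - \tau_f\|_f$. If the approximating sequence is chosen so that $f(|y_n|)$ stays bounded below by a positive multiple of $\|\tau_f\|_f$, this forces $\tau_f = 0$ for every $f$, and then $\|\tau\|^{2} = \sup_f \|\tau_f\|_f^{2}$ gives $\tau = 0$.

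The main obstacle lies in this final refinement: a generic norm-approximation $\hat{y}_{n,f} \to \tau_f$ in $\mathscr{H}_f$ does not automatically control $f(|y_n|)$ from below, because the functional value $f(|y|)$ can decay much faster than the Hilbert-space norm $\|\hat{y}_f\|_f = f(|y|^{2})^{1/2}$. The heart of the proof is therefore to use that $f$ is a pure state (so that its GNS representation of $\mathscr{A}$ is irreducible and rich in unitaries) to adjust the $y_n$ so that $f(|y_n|)$ remains comparable to $\|\tau_f\|_f$; once this is secured, the coercivity argument closes cleanly and we obtain $x = T^{-1}\tau$ with the asserted bound.
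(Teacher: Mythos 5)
Your overall strategy coincides with the paper's: the same operator $T:\mathscr X\to\mathscr Y'$ with $T(x)(y)=B(x,y)$, adjointability via self-duality, the lower bound $\|Tx\|\ge ck\|x\|$ from hypothesis (1), closedness of the range and the decomposition $\mathscr Y'=T(\mathscr X)\oplus T(\mathscr X)^\perp$, and even the key estimate $ck\,f(|y_n|)\le\|B\|\,\|\hat y_{n,f}-\tau_f\|_f$ obtained from hypothesis (2), \eqref{Pas1} and Cauchy--Schwarz. However, the step you yourself label ``the heart of the proof'' --- concluding $T(\mathscr X)^\perp=\{0\}$ --- is left open, so the proposal is incomplete exactly where the real work lies. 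Moreover, the refinement you ask for points in the wrong direction: you do not need (and, as you observe, cannot in general arrange) a lower bound of $f(|y_n|)$ by a multiple of $\|\tau_f\|_f$, and no irreducibility or unitary adjustment of the $y_n$ is involved.

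The paper closes the argument by using your inequality in the opposite sense: from $ck\,f(|y_n|)\le\|B\|\,\|\hat y_{n,f}-\tau_f\|_f\to0$ one gets $f(|y_n|)\to0$, and since $f(a^2)\le\|a\|\,f(a)$ for $a\ge0$, an approximating sequence with uniformly bounded module norms gives $\|\hat y_{n,f}\|_f^2=f(\langle y_n,y_n\rangle)\le\|y_n\|\,f(|y_n|)\to0$, hence $\tau_f=\lim_n\hat y_{n,f}=0$; by \eqref{Pas1} this yields $f(\tau(y))=0$ for all $y\in\mathscr Y$ and all normal pure states $f$, whence (in the paper's argument) $\tau=0$ and $T$ is onto $\mathscr Y'$. (The passage from $f(|y_n|)\to 0$ to $f(\langle y_n,y_n\rangle)\to0$ is exactly the point where boundedness of $\{\|y_n\|\}$ must be secured; the paper treats it tersely, but this, not a lower bound on $f(|y_n|)$, is the missing ingredient.) Two further repairs are needed in your sketch: \eqref{Pas3} is stated only for normal positive functionals, while hypotheses \eqref{main}--\eqref{main2} concern pure states, which need not be normal; so in the coercivity step you should not write $\|Tx\|^2=\sup_f\|(Tx)_f\|_f^2$ over pure states, but rather estimate $\|Tx\|\ge\sup_{\|y\|=1}\|\langle\hat y,Tx\rangle\|=\sup_{\|y\|=1}\|B(x,y)\|$ via \eqref{Pas2} and then evaluate at a pure state $f_0$ with $f_0(|x|)=\|x\|$, as the paper does. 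Finally, your estimate gives only $\|x\|\le\|\tau\|/(ck)$; the constant $k$ cannot simply be ``absorbed into $c$'', since the bound asserted in the theorem involves the same $c$ as in \eqref{main} (the paper derives $c\|x\|\le\sup_{\|v\|=1}\|B(x,v)\|=\|\tau\|$ for the solution $x$), so this discrepancy must be addressed rather than waved away.
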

\begin{proof}
Let $T:\mathscr{X}\to \mathscr{Y}^\prime$ be defined by
$$T(x)(y)=B(x,y)\in\mathscr A\qquad (y\in \mathscr{Y})\,.$$
It is easy to see that $T$ is a bounded $\mathscr{A}$-linear map. We shall show that $T$ is adjointable: For $\tau\in \mathscr{Y}^\prime,$ we define $\phi_\tau\in X'$ by $\phi_\tau (x)=\langle \tau,Tx\rangle$. Due to $\mathscr{X}^{\prime}$ is self-dual, there exits $z_\tau\in \mathscr{X}$ such that $\langle \tau,Tx\rangle =\phi_\tau(x)=\langle x, z_\tau\rangle$. Now we define $S:\mathscr{Y}^\prime\to \mathscr{X}$ by $S(\tau)=z_\tau$. We have $ \langle \tau,Tx\rangle =\langle z_\tau,x\rangle =\langle S\tau,x\rangle$, so that $T^*=S$.

Next, we show that the range $\mathcal{R}(T)$ of $T$ is closed. It follows from (\ref{main}) that $T$ is bounded below. Indeed, let $x\in\mathscr X$. Then take $f_0\in\mathcal P\mathcal S(\mathscr A)$ such that $f_0(|x|)=\|x\|$, and take $y_0\in\mathscr Y$ such that $\|y_0\|=1$, $f_0(|y_0|)\geq k$, and (\ref{main}) holds for $y=y_0$. Then

\begin{eqnarray*}
\|Tx\|&=& \sup_{\tau\in\mathscr{Y}^\prime, \|\tau\|=1}\|\langle\tau,Tx\rangle\|\\
&\geq&\sup_{f\in\mathcal P\mathcal S(\mathscr A),\tau\in\mathscr{Y}^\prime,\|\tau\|=1}|f(\langle \tau,Tx\rangle)|\\
&\geq&\sup_{f\in\mathcal P\mathcal S(\mathscr A),y\in\mathscr{Y},\|y\|=1}|f(\langle \hat{y},Tx\rangle)|\\
&=&\sup_{f\in\mathcal P\mathcal S(\mathscr A),y\in\mathscr{Y},\|y\|=1}|f(Tx(y))|\qquad\qquad (\mbox{by~} \eqref{Pas2})\\
&=&\sup_{f\in\mathcal P\mathcal S(\mathscr A),y\in\mathscr{Y},\|y\|=1}|f(B(x,y))|\\
&\geq&|f_0(B(x,y_0))|\geq cf_0(|x|)f_0(|y_0|)\\
&\geq&cf_0(|x|) k=ck\cdot\|x\|.
\end{eqnarray*}

Therefore, if $T(x_n)\to \tau$ as $n\to \infty$, then  $x_n\to z$ for some $z\in \mathscr{X}$. Hence $Tx_n\to Tz.$ This shows that $\mathcal{R}(T)$ is closed.

By \cite[Theorem 3.2]{LAN} we get
 \[
 \mathscr{Y}^\prime=T(\mathscr{X})\oplus T(\mathscr{X})^\perp\,.
 \]
 
We claim that $T(\mathscr{X})^\perp=0$. Indeed, if $\tau\in T(\mathscr{X})^\perp$, then for all $x\in \mathscr{X}$
 \begin{align*}
 \langle Tx, \tau\rangle=0\,.
 \end{align*}
 Let $f\in \mathcal P\mathcal{S}(\mathscr{A})$ be normal. We have
 \begin{align*}
 \left((Tx)_f,\tau_f\right)_f=f\left(\langle Tx, \tau\rangle\right) =0\quad (x\in \mathscr{X})\qquad\qquad (\mbox{by~} \eqref{Pas3})\,.
 \end{align*}
 Since $\tau_f\in \mathscr{H}_f$ there is $\{y_n\}\subset \mathscr{Y}$ (depending on $f$) such that $y_n+N_f\to \tau_f$. Therefore
 \begin{align}\label{eq1}
 \lim_{n}\left((Tx)_f, y_n+N_f\right)_f=0 \quad (x\in \mathscr{X}).
 \end{align}
Let $L$ be the norm closure of the subspace $\{(Tx)_f:x\in\mathscr X\}$ of $\mathscr H_f$. Decompose $\eta_n=y_n+N_f$ as $\eta_n=\eta_n^0+\eta_n^\perp$, where $\eta_n^0\in L$, $\eta_n^\perp\in L^\perp$. Using a standard limit argument, referring to the bondedness of $\{\eta_n^0\}$, and employing (\ref{eq1}), we infer that $\lim_{n\to\infty}\|\eta_n^0\|=0$.  
By assumption, for each $y_n$ there exists $x_n\in\mathscr X$ such that $\|x_n\|=1$, $f(|x_n|)\geq k$, and (\ref{main2}) holds.   
Note that
$|\left((Tx_n)_f,\eta_n\right)_f|=|\left((Tx_n)_f,\eta_n^0\right)_f|\leq \|T\|\|\eta_n^0\|$,
whence
 \begin{align}\label{eq1'}
 \lim_{n}\left((Tx_n)_f, y_n+N_f\right)_f=0.
 \end{align}

Using (\ref{main2}), we get
 \begin{align}\label{eq2}
ck\cdot f(|y_n|)&\leq cf(|x_n|)f(|y_n|)\leq |f(B(x_n,y_n))|=|f((Tx_n)(y_n))|\nonumber\\
&=|(y_n+N_f,(Tx_n)_f)_f|\qquad(\mbox{by~} \eqref{Pas1})\,.
 \end{align}
 It follows from \eqref{eq1'} and \eqref{eq2} that
$\lim_nf(|y_n|)=0$, hence $\lim_{n}f(\langle y_n,y_n\rangle)=0$. Thus $\tau_f=0$ and so $f(\tau(y))=0$
 for all normal functionals $f\in \mathcal P\mathcal{S}(\mathscr{A})$ and all $y\in \mathscr{Y}$. Hence $\tau=0$. Thus $\mathscr{Y}=T(\mathscr{X})$.

Note that $B$ is nondegenerate with respect to the first variable, that is, for each $x\in \mathscr{X}\backslash\{0\}$, there exists $y\in \mathscr{Y}$ such that $B(x,y)\neq 0$. Indeed, if $x\neq 0$, then there is $f\in\mathcal P\mathcal S(\mathscr A)$ such that $f(|x|)\neq 0$. Non-degeneracy of $B$ follows now from (\ref{main}). Therefore, if $B(x_1,y)=B(x_2,y)$ for any $y$ then $x_1=x_2$.

Let $\tau\in \mathscr{Y}^\prime$ and let $x\in \mathscr{X}$ such that 
\[
B(x,v)=\tau(v)\quad(v\in \mathscr{Y})\,.
\]
Thus,
\[
c\|x \|\leq \sup_{\|v\|=1}\|B(x,v)\|=\sup_{\|v\|=1}\|\tau(v)\|=\|\tau\|\,.
\]
Hence $\|x\|\leq \frac{\|\tau\|}{c}$.
\end{proof}

\begin{corollary}\label{cor-uniform}
Let $\mathscr{X},\mathscr{Y}$ be full Hilbert $C^*$-modules over a $W^*$-algebra $\mathscr{A}$, and let $\mathscr{X}$ be self-dual. Let $B:\mathscr{X}\times \mathscr{Y}\to \mathscr{A}$ be a bounded $\mathscr{A}$-sesqulinear form (i.e., $\mathscr{A}$-linear on $\mathscr{Y}$ and anti-$\mathscr{A}$-linear on $\mathscr X$) such that there exists $c>0$ such that
$$
|f(B(x,y))|\geq cf(|x|)f(|y|)
$$
for any $f\in\mathcal P\mathcal S(\mathscr A)$, and for any $x\in\mathscr X$, $y\in\mathscr Y$.
Then for any $\tau\in \mathscr{Y}^\prime$ there is a unique element $x\in \mathscr{X}$ such that $\|x\|\leq \frac{\|\tau\|}{c}$ and
\[
B(x,y)=\tau(y)\qquad ( y\in \mathscr{Y}).
\]
\end{corollary}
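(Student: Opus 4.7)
The plan is to deduce this corollary directly from Theorem~\ref{l2} by verifying its hypotheses (1) and (2). Since the pointwise inequality $|f(B(x,y))|\ge cf(|x|)f(|y|)$ is now assumed for \emph{every} triple $(f,x,y)$, the inequalities \eqref{main} and \eqref{main2} come for free, and the entire task reduces to producing one constant $k>0$, independent of $f$, such that for every pure state $f\in\mathcal P\mathcal S(\mathscr A)$ there exists a unit vector $y\in\mathscr Y$ with $f(|y|)\ge k$, and symmetrically a unit vector $x\in\mathscr X$ with $f(|x|)\ge k$. The candidate $y$ in condition (1) need not depend on $x$ at all, which is what makes the reduction clean.

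The first step is to combine fullness of $\mathscr Y$ with the fact that $\mathscr A$ is unital (as a $W^*$-algebra): the closed linear span of $\{\langle y_1,y_2\rangle:y_1,y_2\in\mathscr Y\}$ equals $\mathscr A$, so we can fix finitely many $y_1,\dots,y_n\in\mathscr Y$ with $\|y_j\|\le 1$ and scalars $c_{jk}\in\C$ with $\|1_{\mathscr A}-\sum_{j,k=1}^n c_{jk}\langle y_j,y_k\rangle\|<1/2$. Evaluating at an arbitrary state $f$ forces $|\sum_{j,k}c_{jk}f(\langle y_j,y_k\rangle)|>1/2$, so some summand obeys $|f(\langle y_j,y_k\rangle)|>1/(2n^2C)$, where $C=\max_{j,k}|c_{jk}|$.

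The second step converts this into a lower bound on a \emph{diagonal} value and then on $f(|y|)$. The Cauchy--Schwarz inequality for states gives $|f(\langle y_j,y_k\rangle)|^2\le f(\langle y_j,y_j\rangle)\cdot f(\langle y_k,y_k\rangle)$, so at least one of $f(\langle y_j,y_j\rangle),f(\langle y_k,y_k\rangle)$ is bounded below by a fixed $\delta>0$. Let $y$ denote that element and set $\tilde y=y/\|y\|$; since $\|y\|\le 1$ we still have $f(|\tilde y|^2)\ge\delta$, and the elementary $C^*$-inequality $a^2\le\|a\|\cdot a$ for $a\in\mathscr A_+$ combined with $\|\tilde y\|=1$ yields $f(|\tilde y|)\ge f(|\tilde y|^2)\ge\delta$. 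An identical argument with $\mathscr X$ in place of $\mathscr Y$ furnishes the required $x$, and taking $k$ to be the smaller of the two resulting constants verifies both clauses of Theorem~\ref{l2}.

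The main obstacle in this plan is exactly this last chain of estimates: fullness gives density of $\langle\mathscr Y,\mathscr Y\rangle$ in $\mathscr A$, but one must pass from this qualitative statement to a \emph{uniform} lower bound on $f(|y|)$ as $f$ ranges over all pure states. Both the state Cauchy--Schwarz inequality and the positive-element inequality $a^2\le\|a\|\,a$ are essential for that transition; once the uniform $k$ has been produced, the assertion (including the norm bound $\|x\|\le\|\tau\|/c$) is an immediate application of Theorem~\ref{l2}.
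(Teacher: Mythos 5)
Your proof is correct, and its overall strategy coincides with the paper's: since the displayed inequality is assumed for every triple $(f,x,y)$, the whole task is to produce one constant $k>0$ such that every $f\in\mathcal P\mathcal S(\mathscr A)$ admits unit vectors $x\in\mathscr X$ and $y\in\mathscr Y$ with $f(|x|)\geq k$ and $f(|y|)\geq k$, after which Theorem \ref{l2} applies verbatim (giving uniqueness and the bound $\|x\|\leq\|\tau\|/c$); both you and the paper then finish with the same elementary step $f(|x|)\geq f(|x|^2)$ for norm-one elements. Where you genuinely diverge is in how the uniform diagonal bound is extracted from fullness. The paper invokes the structural fact (cf.\ \cite[Theorem 2.1]{Brown} and \cite[Lemma 2.4.3]{MT}) that a full Hilbert module over a unital $C^*$-algebra contains finitely many elements with $\sum_{i=1}^m\langle x_i,x_i\rangle=1$ exactly, and pigeonholes on the diagonal terms to get $f(\langle x_i,x_i\rangle)\geq \tfrac1m$ for some $i$. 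You use only the definition of fullness: approximate $1_{\mathscr A}$ within $\tfrac12$ by a finite $\mathbb{C}$-linear combination $\sum_{j,k}c_{jk}\langle y_j,y_k\rangle$, pigeonhole to bound one (possibly off-diagonal) term $|f(\langle y_j,y_k\rangle)|$ from below uniformly in $f$, and then pass to a diagonal bound via the Cauchy--Schwarz inequality for the semi-inner product $f(\langle\cdot,\cdot\rangle)$, with a harmless normalization since $\|y_j\|\leq 1$. Your route is more elementary and self-contained, avoiding the cited exact partition-of-unity result at the cost of a slightly longer estimate and a worse explicit constant; the paper's route is shorter once Brown's theorem is granted. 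Both yield a $k$ independent of $f$, which is exactly what conditions (1) and (2) of Theorem \ref{l2} require.
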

\begin{proof}
If the assumption holds then it suffices to show that there exists $k>0$ such that for any $f\in\mathcal P\mathcal S(\mathscr A)$, there exist $x\in\mathscr X$ and $y\in\mathscr Y$ of norm 1 such that $f(|x|)>k$, $f(|y|)>k$. Then the assumptions of Theorem \ref{l2} hold true.

As $\mathscr X$ is full, there exists $m\in\mathbb N$ and $x_1,\ldots,x_m\in\mathscr{X}$ such that $\sum_{i=1}^m\langle x_i,x_i\rangle=1$ (cf. \cite[Theorem 2.1]{Brown} and  \cite[Lemma 2.4.3]{MT}). Therefore, for any $f\in\mathcal P\mathcal S(\mathscr A)$ there exists $x\in\mathscr{X}$ such that $\|x\|\leq 1$ and $f(\langle x,x\rangle)\geq \frac{1}{m}$. Then $f(|x|)\geq f(|x|^2)\geq\frac{1}{m}$. A similar argument works for $y$.

\end{proof}

The following example shows that the conditions of Corollary \ref{cor-uniform} are too restrictive.

\begin{example}
Let $\mathscr A=M_2$ be the algebra of two-by-two matrices, $\mathscr X=\mathscr Y=\mathscr A$, $B(x,y)=x^*y$. Then $B$ satisfies the conditions of Theorem \ref{l2}, but does not satisfy those of Corollary \ref{cor-uniform}. Indeed, given $f\in\mathcal P\mathcal S(\mathscr A)$ and $x\in M_2$, take a polar decomposition $x=uh$ with a unitary $u$ and positive $h$, and set $y=u$. Then $f(|y|)=f(1)=1$, and 
$$
|f(B(x,y))|=|f(hu^*u)|=f(h)=f(|x|)=f(|x|)f(|y|).
$$
On the other hand, let $x=\frac{1}{2}\left(\begin{smallmatrix}1&1\\1&1\end{smallmatrix}\right)$, $y=\frac{1}{2}\left(\begin{smallmatrix}1&-1\\-1&1\end{smallmatrix}\right)$, and let $f$ be the pure state given by $f\left(\left(\begin{smallmatrix}a_{11}&a_{12}\\a_{21}&a_{22}\end{smallmatrix}\right)\right)=a_{11}$. Then $x^*y=0$, hence $f(B(x,y))=0$, while $f(|x|)=f(|y|)=\frac{1}{2}$. 

\end{example}

\begin{corollary}\label{LM_Wstar}
Let $\mathscr{X}$ and $\mathscr{Y}$ be self-dual Hilbert $C^*$-modules over a $W^*$-algebra $\mathscr{A}$. Suppose that
\[
B:\mathscr{X}\times \mathscr{Y}\to \mathscr{A}
\]
is a map satisfying the conditions of Theorem \ref{l2}.
Then there exists a bounded adjointable invertible $\mathscr{A}$-linear operator $T: {\mathscr{X}} \to \mathscr{Y}$ such that $B(x,y) = \langle T(x),y \rangle$ for any $x \in \mathscr{X}$, any $y \in \mathscr{Y}$. 

\end{corollary}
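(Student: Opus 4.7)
The plan is to combine the self-duality of $\mathscr Y$ with the conclusion of Theorem \ref{l2} to promote the map $T:\mathscr X\to\mathscr Y'$ constructed in the proof of that theorem into a map $T:\mathscr X\to\mathscr Y$; all requisite properties then fall out of the theorem and its proof.

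Concretely, for each $x\in\mathscr X$ the map $y\mapsto B(x,y)$ belongs to $\mathscr Y'$, so the self-duality of $\mathscr Y$ produces a unique $T(x)\in\mathscr Y$ with
\[
\langle T(x),y\rangle=B(x,y)\qquad (y\in\mathscr Y).
\]
Using the anti-$\mathscr A$-linearity of $B$ in its first slot together with the rule $(\hat y\cdot a)(z)=a^*\langle y,z\rangle$ governing the right $\mathscr A$-action on $\mathscr Y'$, one checks that $T$ is $\mathscr A$-linear; boundedness is immediate from the estimate $\|T(x)\|=\|B(x,\cdot)\|\le\|B\|\,\|x\|$.

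Bijectivity with bounded inverse is extracted directly from Theorem \ref{l2}. Given $y_0\in\mathscr Y$, applying Theorem \ref{l2} to $\tau=\widehat{y_0}\in\mathscr Y'$ yields $x\in\mathscr X$ with $B(x,y)=\langle y_0,y\rangle$ for all $y$, so $T(x)=y_0$; hence $T$ is surjective. The norm estimate $\|x\|\le\|\tau\|/c$ of Theorem \ref{l2}, combined with the fact that the embedding $\mathscr Y\hookrightarrow\mathscr Y'$ is isometric, rewrites as $\|x\|\le\|T(x)\|/c$, which gives injectivity of $T$ and the bound $\|T^{-1}\|\le 1/c$.

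Adjointability is essentially built into the proof of Theorem \ref{l2}, where an $\mathscr A$-linear map $S:\mathscr Y'\to\mathscr X$ was produced satisfying $\langle\tau,T(x)\rangle_{\mathscr Y'}=\langle S(\tau),x\rangle$ in the extended inner product of Theorem \ref{th1}; composing $S$ with the identification $\mathscr Y\cong\mathscr Y'$ yields the adjoint of the newly-defined $T:\mathscr X\to\mathscr Y$. The only point that demands watchfulness is to keep the conjugate-linearity conventions and the right $\mathscr A$-module structure on $\mathscr Y'$ aligned with the inner products on $\mathscr X$ and $\mathscr Y$, so that the identity $\langle T(x),y\rangle=B(x,y)$ encodes an $\mathscr A$-linear (rather than anti-linear) $T$; this is bookkeeping rather than a serious obstacle.
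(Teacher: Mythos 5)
Your proposal is correct and follows essentially the same route as the paper: the paper's proof simply takes the operator $T:\mathscr X\to\mathscr Y'$ (with its adjoint $S$, the lower bound, and the surjectivity onto $\mathscr Y'$) already established in the proof of Theorem \ref{l2} and invokes the self-duality of $\mathscr Y$ to identify $\mathscr Y'$ with $\mathscr Y$. You merely spell out the details of that identification (linearity, boundedness, bijectivity with $\|T^{-1}\|\le 1/c$, and adjointability via $S$), which the paper leaves implicit.
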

\begin{proof}
The operator $T:\mathscr X\to\mathscr Y'$ was constructed in the proof of Theorem \ref{l2}. As $\mathscr Y$ is self-dual, we are done.

\end{proof}

\begin{example} \label{ex_inn_prod}
Let $\mathscr{A}$ be a $W^*$-algebra and $\mathscr X$ be self-dual and full, let $\mathscr Y=\mathscr X$, and let $B(x,y)=\langle x,y\rangle$. Then $B$ satisfies the conditions of Theorem \ref{l2}. Indeed, let $x\in\mathscr X$. By \cite[Proposition 3.11]{Pa}, there exists $z\in\mathscr X$ such that $p=\langle z,z\rangle$ is the range projection of $|x|$ and $x=z\cdot|x|$. The Hilbert module $\mathscr Z$ generated by $z$ is projective, hence  $\mathscr X=\mathscr Z\oplus\mathscr Z^\perp$. If $\mathscr X$ is full, then so is $\mathscr Z^\perp$ as a module over $(1-p)\mathscr A(1-p)$. Therefore, there exists $k\in\mathbb N$ and $z_1,\ldots,z_k\in\mathscr Z^\perp$ such that $\sum_{i=1}^k|z_i|^2=1-p$ (by \cite[Theorem 2.1]{Brown}; cf. \cite[Lemma 2.4.3]{MT}). Hence,  we can find $z'\in\mathscr Z^\perp$ such that $f(|z'|)> k$. Take $y=z+z'$. Then $f(|z+z'|)=f\left(\left(|z|^2+|z'|^2\right)^{1/2}\right)\geq f(|z'|)> k$, and $B(x,y)=\langle z\cdot|x|,z+z'\rangle=|x|\langle z,z\rangle=|x|$, so $|f(B(x,y))|=f(|x|)\geq f(|x|)f(|y|)$ (as $\|y\|=1$) for any $f\in\mathcal P\mathcal S(\mathscr A)$. The second condition of Theorem \ref{l2} can be verified in the same way. It is easy to see that, in this example, $T$ is the identity operator.
\end{example}

The following example shows that our result cannot be transferred from $W^*$-algebras to general $C^*$-algebras, even in the commutative case.
\begin{example}
Let $\mathscr{A}=C([0,1])$ be the $C^*$-algebra of continuous functions on the unit interval $[0,1]$ which is equipped with the standard topology derived from the real numbers, and let $\mathscr{J}:=\{u\in \mathscr{A}: u(0)=0\} $, which is a closed ideal of $\mathscr{A}$. Take $\mathscr{X}=\mathscr{A}$ and $\mathscr{Y}=\mathscr{J}$. Let $B:\mathscr{X}\times \mathscr{Y}\to \mathscr{A}$ be defined by $B(u,v):=\overline{u}v$ for $u\in \mathscr{X}, v\in \mathscr{Y}$. It is easy to see that $B$ is a bounded $\mathscr{A}$-sesqulinear form.
Let $f\in \mathcal{PS}(\mathscr{A})$. Then 
\begin{align*}
|f(B(u,v))|&=|f(\overline{u}v)|=f(|u|)f(|v|)
\end{align*}
for all $u\in\mathscr{X}, v\in \mathscr{Y}$. Utilizig the fact that for any state $f$,  $\sup\{f(a): \|a\|=1, a\geq 0\} = 1$, $B$ satisfies the conditions of Theorem \ref{l2} with $c=1$ and any $k<1$, except that $\mathscr A$ is not a $W^*$-algebra. Note that $\mathscr{Y}^\prime=C_b(0,1]$ consists of all bounded continuous functions on the half-open unit interval $(0,1]$, and if $\tau\in\mathscr{Y}^\prime$, $v\in\mathscr{Y}$, then $\tau(v)=\tau\cdot \overline v$. If $\tau$ has no limit at $0$, then there is no $u\in\mathscr X$ such that $\tau(v)=u\cdot \overline v$ for any $v\in\mathscr{Y}$. For example, take $\tau(t)=\sin\frac{1}{t}$, $t\in(0,1]$. Then $\tau\in \mathscr{Y}^\prime$. If $\tau(y)=\langle x,y\rangle$ for some $x\in C[0,1]$, then $x(t)=\tau(t)$ for any $t\in(0,1]$ since this should hold on supports of all $y\in \mathscr{Y}=C_0(0,1]$. But there is no function $x(t)$ in $C[0,1]$ such that $x(t)=\tau(t)$ for any $t\in(0,1]$. 
\end{example}

\begin{theorem}\label{tha}
Let $\mathscr{X}$ and $\mathscr{Y}$ be Hilbert $C^*$-modules over a $W^*$-algebra $\mathscr{A}$ and let $\mathscr{X}$ be self-dual. Let $\{\mathscr{X}_\lambda\}_{\lambda\in \Lambda}$ be a family of closed orthogonally complemented submodules of $\mathscr{X}$, let $\{\mathscr{Y}_\lambda\}_{\lambda\in \Lambda}$ be an upwards directed family of closed submodules of $\mathscr{Y}$ and let $\mathscr{V}=\cup \mathscr{Y}_\lambda$. Suppose that
\[
B:\mathscr{X}\times \mathscr{V}\to \mathscr{A}
\]
is a map satisfying:
\begin{itemize}
\item[(a)]$B_\lambda=B|_{\mathscr{X}_\lambda\times \mathscr{Y}_\lambda}$ satisfies the conditions of Theorem \ref{l2} with the same constants $c$ and $k$ for all $\lambda\in\Lambda$;
\item[(b)]$B(.,v)$ is a bounded $\mathscr{A}$-linear functional on $\mathscr{X}$ for all $v\in \mathscr{V}$
 \end{itemize}
 Then for each $\tau\in \mathscr{V}',$ there is $x\in \mathscr{X}$ such that $\|x\|\leq \frac{\|\tau\|}{c}$ and
 \[
 B(x,v)=\tau(v)
 \]
 for all $v\in \mathscr{V}$.
\end{theorem}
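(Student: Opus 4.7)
The strategy is to solve the problem locally on each pair $(\mathscr{X}_\lambda,\mathscr{Y}_\lambda)$ using Theorem \ref{l2}, and then extract a weak-$*$ cluster point of the resulting bounded net via the conjugate-space structure of $\mathscr{X}$ guaranteed by Lemma \ref{l3}. Fix $\lambda\in\Lambda$. Since $\mathscr{X}_\lambda$ is orthogonally complemented in the self-dual module $\mathscr{X}$, the opening lemma of this section yields that $\mathscr{X}_\lambda$ is itself self-dual. The restriction $\tau_\lambda:=\tau|_{\mathscr{Y}_\lambda}\in\mathscr{Y}_\lambda'$ satisfies $\|\tau_\lambda\|\leq\|\tau\|$, and hypothesis (a) says that $B_\lambda=B|_{\mathscr{X}_\lambda\times\mathscr{Y}_\lambda}$ meets the assumptions of Theorem \ref{l2} with the uniform constants $c,k$. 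Applying that theorem produces a unique $x_\lambda\in\mathscr{X}_\lambda$ with $\|x_\lambda\|\leq\|\tau\|/c$ and $B(x_\lambda,y)=\tau(y)$ for every $y\in\mathscr{Y}_\lambda$.

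The net $\{x_\lambda\}_{\lambda\in\Lambda}$ is uniformly norm-bounded by $\|\tau\|/c$. By Lemma \ref{l3}, $\mathscr{X}$ is the Banach dual of some space $\mathscr{X}_*$, so by Banach--Alaoglu the closed ball of radius $\|\tau\|/c$ in $\mathscr{X}$ is weak-$*$ compact. Passing to a subnet $\{x_{\lambda(\mu)}\}_{\mu\in M}$ with $\mu\mapsto\lambda(\mu)$ cofinal in $\Lambda$, we obtain a weak-$*$ limit $x\in\mathscr{X}$; weak-$*$ lower semicontinuity of the norm gives $\|x\|\leq\|\tau\|/c$.

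Now fix $v\in\mathscr{V}$. There is $\lambda_0\in\Lambda$ with $v\in\mathscr{Y}_{\lambda_0}$, and by the upwards directedness of $\{\mathscr{Y}_\lambda\}$ together with cofinality of the subnet, $v\in\mathscr{Y}_{\lambda(\mu)}$ for $\mu$ eventually, so $B(x_{\lambda(\mu)},v)=\tau(v)$ eventually. By (b) and self-duality of $\mathscr{X}$, the bounded $\mathscr{A}$-linear functional $x\mapsto B(x,v)$ is represented by an inner product with a fixed element $w_v\in\mathscr{X}$; composing with any normal positive functional $f$ on $\mathscr{A}$ produces a scalar functional on $\mathscr{X}$ belonging to the predual $\mathscr{X}_*$ furnished by Paschke's construction, so weak-$*$ convergence yields $f(B(x_{\lambda(\mu)},v))\to f(B(x,v))$. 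Eventually the left side equals $f(\tau(v))$, so $f(B(x,v))=f(\tau(v))$ for every normal $f$; since normal functionals separate points of the $W^*$-algebra $\mathscr{A}$, we conclude $B(x,v)=\tau(v)$. The main obstacle is precisely this last weak-$*$ continuity step: identifying the abstract predual afforded by Lemma \ref{l3} with the concrete functionals $x\mapsto f(\langle w_v,x\rangle)$ and verifying that the resulting convergence in $\mathscr{A}$ is strong enough to pin down the value $B(x,v)$ rests on the detailed interplay of normal states with Paschke's predual construction, and is the delicate piece of the argument.
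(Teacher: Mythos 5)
Your argument is correct and essentially identical to the paper's proof: solve locally on each pair $(\mathscr{X}_\lambda,\mathscr{Y}_\lambda)$ via Theorem \ref{l2}, use Lemma \ref{l3} and Banach--Alaoglu to extract a weak-$*$ cluster point of the bounded net $\{x_\lambda\}$, and then test $B(x,v)=\tau(v)$ against normal functionals using the representation $B(\cdot,v)=\langle\cdot,u\rangle$ furnished by self-duality. The step you flag as delicate is exactly what the paper settles by citing \cite[Remark 3.9]{Pa} (the predual of a self-dual Hilbert $W^*$-module is spanned by the functionals $x\mapsto f(\langle u,x\rangle)$ with $f$ normal), and your explicit remark that each $\mathscr{X}_\lambda$ is self-dual by the section's opening lemma is a small detail the paper leaves implicit.
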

\begin{proof}

Let $\tau\in \mathscr{V}'$. Set $\tau_\lambda=\tau|_{\mathscr{Y}_\lambda}$. Then $\tau_\lambda\in \mathscr{Y}_\lambda'$. By Theorem \ref{l2}, there is an element $x_\lambda\in\mathscr X_\lambda$ such that $\|x_\lambda\|\leq \frac{\|\tau_\lambda\|}{c}$ and
\[
B_\lambda (x_\lambda, y)=\tau_\lambda(y)\qquad ( y\in Y_\lambda)\,.
\]
It is easy to see that $c\|x_\lambda\|\leq \|\tau\|$. So $\{x_\lambda\}$ is a bounded net. It follows from Lemma \ref{l3} that $\mathscr{X}$ is a conjugate space. Applying the Banach--Alaoglu theorem, and by passing to a subnet if necessary, we may assume that there is $z\in \mathscr{X}$ such that $x_{\lambda}\to z$ in the weak* topology. We have
 \[
\|z\|\leq \liminf_{n\to \infty}\|x_\lambda\|\leq\liminf_{n\to \infty}\frac{\|\tau_\lambda\|}{c}\leq \frac{\|\tau\|}{c}
\]
Now we prove that $B(z,v)=\tau(v)$ for all $v\in \mathscr{V}$.
To this end, let $v\in \mathscr{V}$. There is $\lambda_0\in \Lambda$ such that $v\in \mathscr{Y}_{\lambda_0}$ for all $\lambda\geq \lambda_0$ and so that
\begin{align}\label{eqw}
B(x_{\lambda},v)=\tau_{\lambda}(v)=\tau(v) \,.
\end{align}
On the other hand, if $\rho(x):=B(x,v)^*$, then $\rho\in \mathscr{X}^\prime$. Since $\mathscr{X}$ is self-dual, $\rho(\cdot)=\langle u,\cdot\rangle $ for some $u\in \mathscr{X}$. Hence $B(.,v)=\langle\cdot,u\rangle$. Now suppose $f\in \mathcal P\mathcal{S}(\mathscr{A})$ is normal. Utilizing \cite[Remark 3.9]{Pa}, we get 
\[
f(B(x_\lambda,v))=f(\langle x_\lambda, u\rangle)\to f(\langle z,u\rangle )
\]
Also \eqref{eqw} gives us $f(B(x_\lambda,v))\to f(\tau (v))$ hence by  uniqueness of limit, we get $f(B(z,v))=f(\tau(v))$ for all normal functionals $f\in \mathcal P\mathcal{S}(\mathscr{A})$. Thus $B(z,v)=\tau (v)$.
\end{proof}

The following fact can be easily derived using the self-duality of the Hilbert $C^*$-module and the positivity of the $C^*$-valued inner product (cf. Example \ref{ex_inn_prod}).

\begin{remark}
Let $\mathscr A$ be a $W^*$-algebra and $\mathscr X$ be a full Hilbert $\mathscr A$-module. Let $T$ be a positive bounded invertible $\mathscr A$-linear operator on $\mathscr X$. Then the sesqilinear form $B_T(x,y)=\langle T(x),y \rangle$ satisfies the conditions of Theorem \ref{l2}. Indeed, by Example \ref{ex_inn_prod} for any $x \in \mathscr X$ we have $|f(B_T(x,y))| = |f(\langle T(x),y \rangle)| \geq f(|T(x)|)f(|y|)$ for some element $y \in \mathscr X$ with $\|y\|=1$ selected with respect to $T(x)$. However, by the positivity and invertibility of $T$ the inequality $\langle T(x),T(x) \rangle \geq \|T^{-1}\|^{-2} \langle x,x \rangle$ holds for any $x \in \mathscr X$. So we arrive at $|f(B_T(x,y))| \geq \|T^{-1}\|^{-1} f(|x|)f(|y|)$. The second condition of Theorem \ref{l2} can be verified in a similar way. 

Since for full self-dual Hilbert $C^*$-modules $\mathscr X$ the $C^*$-algebra of coefficients is always unital and every bounded module operator on $\mathscr X$ admits an adjoint operator we can consider the discussed sesquilinear forms in a different way, cf. \cite[Thm. 3.7, (i), (iv)]{Fr99}:  Any other $C^*$-valued inner product $\langle\cdot,\cdot\rangle_0$ on $\mathscr X$ inducing an equivalent norm to the given one can be expressed as $\langle x,y \rangle_0=\langle T(x),y \rangle$ for any $x,y \in \mathscr{X}$ and for an unique invertible positive bounded module operator $T$ on $\mathscr X$. So for $W^*$-algebras and for self-dual Hilbert $W^*$-modules over them we obtained a large class of examples.
\end{remark}

We finish this section giving a version of the Lax--Milgram Theorem for Hilbert spaces. 

\begin{corollary} 
Let $\mathscr{X}$ and $\mathscr{Y}$ be Hilbert spaces, let $\{\mathscr{X}_\lambda\}_{\lambda\in \Lambda}$ be a family of closed subspaces of $\mathscr{X}$, let $\{\mathscr{Y}_\lambda\}_{\lambda\in \Lambda}$ be an upwards directed family of closed subspaces of $\mathscr{Y}$ and let $\mathscr{V}=\cup \mathscr{Y}_\lambda$. Suppose that
\[
B:\mathscr{X}\times \mathscr{V}\to \mathbb{C}
\]
is a map satisfying:
\begin{itemize}
\item[(a)]$B_\lambda=B|_{\mathscr{X}_\lambda\times \mathscr{Y}_\lambda}$ is a bounded sesqulinear form for all $\lambda\in \Lambda$
\item[(b)]$B(.,v)$ is a bounded linear functional on $\mathscr{X}$ for all $v\in \mathscr{V}$
\item[(c)]there exists $c>0$ such that for any $\lambda\in \Lambda$ and for any $x\in \mathscr X_\lambda$ (resp. for any $y\in\mathscr Y_\lambda$) there exists a unit vector $y\in\mathscr Y_\lambda$ (resp. $x\in\mathscr X_\lambda$) such that
\[|B_\lambda(x,y))|\geq c\|x\|\,\|y\|\].
 \end{itemize}
 Then for each $\tau\in \mathscr{V}',$ there is $x\in \mathscr{X}$ such that $\|x\|\leq \frac{\|\tau\|}{c}$ and
 \[
 B(x,v)=\tau(v)
 \]
 for all $v\in \mathscr{V}$.
\end{corollary}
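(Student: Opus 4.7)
The plan is to derive this corollary as an immediate specialization of Theorem \ref{tha} to the scalar algebra $\mathscr{A}=\mathbb{C}$. First I would note that $\mathbb{C}$ is trivially a $W^*$-algebra (being its own predual), that every Hilbert space is a self-dual Hilbert $\mathbb{C}$-module, and that every closed subspace of a Hilbert space is orthogonally complemented by the classical projection theorem. Consequently, the structural hypotheses on $\mathscr{X}$, $\mathscr{Y}$, $\{\mathscr{X}_\lambda\}$ and $\{\mathscr{Y}_\lambda\}$ required by Theorem \ref{tha} hold automatically.

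The substantive step is to verify that hypothesis (c) of the corollary is exactly the specialization to $\mathscr{A}=\mathbb{C}$ of the abstract pure-state conditions appearing in Theorem \ref{l2}. The key observation is that $\mathcal{PS}(\mathbb{C})=\{\mathrm{id}\}$, so for any $x\in\mathscr{X}$ the positive element $|x|=\langle x,x\rangle^{1/2}\in\mathbb{C}$ coincides with $\|x\|$, and the unique pure state evaluates to $f(|x|)=\|x\|$. Under this identification the first pure-state condition of Theorem \ref{l2} becomes: for every $x\in\mathscr{X}_\lambda$ there exists a unit vector $y\in\mathscr{Y}_\lambda$ with $|B_\lambda(x,y)|\geq c\,\|x\|$, which, together with the symmetric statement, is precisely (c). The auxiliary constant may be taken to be $k=1$ uniformly in $\lambda$, since the witnessing vectors are unit vectors and $f(|y|)=\|y\|=1$. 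Hypothesis (b) of the corollary matches (b) of Theorem \ref{tha} verbatim, and (a) of Theorem \ref{tha} is ensured by (a) and (c) together with $k=1$ and the given $c$ working for every $\lambda$.

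Once all the hypotheses are matched, I would invoke Theorem \ref{tha} to extract an $x\in\mathscr{X}$ with $\|x\|\leq\|\tau\|/c$ satisfying $B(x,v)=\tau(v)$ for every $v\in\mathscr{V}$, which is the desired conclusion. The only real content of the argument is the dictionary between pure-state inequalities and classical norm inequalities in the previous paragraph; once one recognises that the state-theoretic formulation collapses to a scalar norm inequality over $\mathbb{C}$, there is no further obstacle, and in particular no analytical work is needed beyond what is already packaged in Theorem \ref{tha}.
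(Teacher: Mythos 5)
Your proposal is correct and is exactly the derivation the paper intends (the corollary is stated without proof as the specialization of Theorem \ref{tha} to $\mathscr{A}=\mathbb{C}$): the unique pure state on $\mathbb{C}$ is the identity, $f(|x|)=\|x\|$, closed subspaces of a Hilbert space are orthogonally complemented, and with $k=1$ condition (c) becomes the hypotheses of Theorem \ref{l2} for each $B_\lambda$ with uniform constants. No gaps.
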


\section{The Lax--Milgram Theorem extended to Hilbert $C^*$-modules over compact $C^*$-algebras}

Suppose, $\mathscr{A}$ is a $C^*$-algebra of compact operators. Then we obtain the following analog of the Lax-Milgram theorem:

\begin{theorem}
Let $\mathscr{X}$ and $\mathscr{Y}$ be Hilbert $C^*$-modules over a $C^*$-algebra of compact operators. Suppose that
\[
B:\mathscr{X}\times \mathscr{Y}\to \mathscr{A}
\]
is a map satisfying:
\begin{itemize}
\item[(a)]$B$ is a bounded $\mathscr{A}$-sesqulinear form;
\item[(b)]there exists $c>0$ and $k>0$ such that for any $f\in \mathcal P \mathcal{S}(\mathscr{A})$ and any $x\in \mathscr{X}$ there exists $y\in\mathscr Y$ such that $\|y\|=1$, $f(|y|)\geq k$ and 
\[
|f(B(x,y))|\geq cf(|x|)f(|y|);
\]
\item[(c)]there exists $c>0$ and $k>0$ such that for any $f\in \mathcal P \mathcal{S}(\mathscr{A})$ and any $y\in \mathscr{Y}$ there exists $x\in\mathscr X$ such that $\|x\|=1$, $f(|x|)\geq k$ and 
\[
|f(B(x,y))|\geq cf(|x|)f(|y|).
\]
\end{itemize}
Then there exists a bounded adjointable invertible $\mathscr{A}$-linear operator $T: {\mathscr{X}} \to \mathscr{Y}$ such that $B(x,y) = \langle T(x),y \rangle$ for any $x \in \mathscr{X}$, any $y \in \mathscr{Y}$. 
\end{theorem}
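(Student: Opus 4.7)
The plan is to reduce to Corollary \ref{LM_Wstar} by passing to the multiplier modules $M(\mathscr{X})$ and $M(\mathscr{Y})$, which are self-dual Hilbert modules over the $W^*$-algebra $M(\mathscr{A})$, and then to restrict the resulting operator back to $\mathscr{X}$.

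First, I would extend $B$ to a bounded $M(\mathscr{A})$-sesquilinear form $\tilde{B}:M(\mathscr{X})\times M(\mathscr{Y})\to M(\mathscr{A})$. For $\xi\in M(\mathscr{X})$, $\eta\in M(\mathscr{Y})$ and $a,b\in\mathscr{A}$, the products $\xi a\in\mathscr{X}$ and $\eta b\in\mathscr{Y}$ give values $B(\xi a,\eta b)\in\mathscr{A}$ with $\norm{B(\xi a,\eta b)}\leq\norm{B}\,\norm{\xi}\,\norm{\eta}\,\norm{a}\,\norm{b}$, and the $\mathscr{A}$-sesquilinearity of $B$ yields the compatibility $B(\xi(ac),\eta(bd))=c^{*}B(\xi a,\eta b)d$. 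This exhibits $(a,b)\mapsto B(\xi a,\eta b)$ as a two-sided $\mathscr{A}$-bimodule map on $\mathscr{A}\times\mathscr{A}$, which produces a unique $\tilde{B}(\xi,\eta)\in M(\mathscr{A})$ characterized by $a^{*}\tilde{B}(\xi,\eta)b=B(\xi a,\eta b)$ for all $a,b\in\mathscr{A}$. Sesquilinearity of $\tilde{B}$ and the equality $\tilde{B}|_{\mathscr{X}\times\mathscr{Y}}=B$ follow routinely from this universal property.

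Next, I would verify that $\tilde{B}$ satisfies the hypotheses of Corollary \ref{LM_Wstar} for all pure states of $M(\mathscr{A})$; this is the main technical obstacle. A case split handles it. For a pure state $g$ of $M(\mathscr{A})$ with $g|_{\mathscr{A}}\neq 0$, the functional $g|_{\mathscr{A}}$ after normalization is a pure state of $\mathscr{A}$ (because $\mathscr{A}$ is an essential closed ideal of $M(\mathscr{A})$), and conditions (b), (c) applied at this pure state provide the required elements $\eta\in\mathscr{Y}\subseteq M(\mathscr{Y})$ and $\xi\in\mathscr{X}\subseteq M(\mathscr{X})$. For a singular pure state $g$ (vanishing on $\mathscr{A}$), one exploits the strict density of $\mathscr{X}$ in $M(\mathscr{X})$ together with the multiplier identity $a^{*}\tilde{B}(\xi,\eta)b=B(\xi a,\eta b)$ and an approximate-unit argument to reduce to the non-singular case.

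Finally, Corollary \ref{LM_Wstar} produces a bounded adjointable invertible $M(\mathscr{A})$-linear operator $\tilde{T}:M(\mathscr{X})\to M(\mathscr{Y})$ with $\tilde{B}(\xi,\eta)=\langle\tilde{T}(\xi),\eta\rangle$. Since $\tilde{T}$ is $M(\mathscr{A})$-linear, its restriction $T:=\tilde{T}|_{\mathscr{X}}$ maps $\mathscr{X}$ into $\mathscr{Y}$ by the restriction property of $M(\mathscr{A})$-linear operators between multiplier modules recalled in the introduction. The restrictions of $\tilde{T}^{-1}$ and $\tilde{T}^{*}$ to $\mathscr{Y}$ take values in $\mathscr{X}$ for the same reason, so $T:\mathscr{X}\to\mathscr{Y}$ is a bounded adjointable invertible $\mathscr{A}$-linear operator, and $B(x,y)=\tilde{B}(x,y)=\langle\tilde{T}(x),y\rangle=\langle T(x),y\rangle$ for all $x\in\mathscr{X}$, $y\in\mathscr{Y}$.
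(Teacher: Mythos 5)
Your proposal follows essentially the same route as the paper's proof: extend $B$ to the multiplier modules $M(\mathscr{X})$, $M(\mathscr{Y})$ over the $W^*$-algebra $M(\mathscr{A})$ (the paper invokes the strict-topology arguments of Baki\'c--Gulja\v{s} where you spell out the bimodule/universal-property construction), check the hypotheses of Corollary \ref{LM_Wstar} for the extension via density of the pure states of $\mathscr{A}$ among those of $M(\mathscr{A})$ and strict density of $\mathscr{X}$, $\mathscr{Y}$ in their multiplier modules, apply the corollary, and restrict the resulting operator back. The only point where you are sketchy --- the singular pure states and the passage of conditions (b), (c) from elements of $\mathscr{X}$, $\mathscr{Y}$ to general multiplier elements --- is precisely where the paper itself is equally brief, so your argument matches the published one in both structure and level of detail.
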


\begin{proof}
Using strict topology arguments as in \cite{BaGu04,BaGu06}, the map $B(\cdot,\cdot)$ can be extended to a map $B_M: M(\mathscr{X}) \times M(\mathscr{Y}) \to M(\mathscr{A})$. Both $M(\mathscr{X})$, $M(\mathscr{Y})$ are self-dual Hilbert $W^*$-modules over the $W^*$-algebra $M(\mathscr{A})$. The set of pure states on $\mathscr A$ is weak dense in the set of pure states on $M({\mathscr A})$. Also, $\mathscr X$ and $\mathscr Y$ are strictly dense in $M({\mathscr X})$ and $M({\mathscr Y})$, respectively. So, if the two conditions (b) and (c) hold for elements from $\mathscr X$ and for pure states on $\mathscr A$ they hold for elements from $M({\mathscr X})$ and for pure states on $M({\mathscr A})$, too. Therefore, Corollary \ref{LM_Wstar} can be applied. We obtain $B_M(x,y) = \langle T_M(x),y \rangle$ for a bounded adjointable invertible $\mathscr{A}$-linear map $T_M: M({\mathscr{X}}) \to M({\mathscr{Y}})$, for any $x \in M({\mathscr{X}})$, any $y \in M({\mathscr{Y}})$. Restricting $B_M(.,.)$ (and respectively, $T_M$) back to $\mathscr{X}$ and $\mathscr{Y}$ we get $B(.,.)$ back, that is the image of $B(.,.)$ belongs to $\mathscr{A}$. Moreover, $T_M$ maps $\mathscr{X}$ into $\mathscr{Y}$ by the ideal properties of the Hilbert $\mathscr{A}$-modules in their multiplier modules and by \cite[Proposition 3.11]{Pa}. 
\end{proof}

Recall, that the Riesz theorem is not valid for certain Hilbert $C^*$-modules over $C^*$-algebras of compact operators, whereas the Lax--Milgram theorem turns out to be valid for all of them. Moreover, the theorem above can be considered as a Lax--Milgram theorem for a class of (not full, in general) non-self-dual, in general, Hilbert $W^*$-modules because $\mathscr A$-modules are always $M({\mathscr A})$-modules.


\end{document}